\documentclass[leqno]{amsart}
\usepackage{amsthm}
\usepackage{color}
\usepackage{amssymb}
\usepackage{amsmath,amsfonts,enumerate}
\usepackage{amsthm}

\numberwithin{equation}{section}

\newtheorem{thm}{Theorem}

\newtheorem{definition}[thm]{Definition}
\newtheorem{cor}[thm]{Corollary}

\begin{document}

\title{Multidimensional singular integrals and integral equations in fractional spaces  I}

 \author{$^{1}$N.K. Bliev}
 \email{bliyev.nazarbay@mail.ru}
\address{Institute of Mathematics and Mathematical Modeling, 050010, Almaty, Kazakhstan.}

\author{$^{2}$K.S. Tulenov}
\email{tulenov@math.kz}
\address{Al-Farabi Kazakh National University, 050040, Almaty, Kazakhstan;
Institute of Mathematics and Mathematical Modeling, 050010, Almaty, Kazakhstan.
}

\thanks{Corresponding authors: $^{2}$Al-Farabi Kazakh National University, 050040, Almaty, Kazakhstan;
$^{1,2}$Institute of Mathematics and Mathematical Modeling, 050010, Almaty, Kazakhstan. email:$^{1}$bliyev.nazarbay@mail.ru and $^{2}$tulenov@math.kz }



\keywords{Multidimensional singular integrals, singular integral equations, Besov spaces}
\date{}
\begin{abstract} In this paper, we investigate boundedness, invertibility and smoothness properties of multidimensional singular integral operators and solvability of the corresponding singular integral equations in Besov spaces.
\end{abstract}

\maketitle

\section{Introduction}
The work consists of two parts. This paper presents first part of the work. In this part, it is considered multidimensional singular operators with characteristics independent on poles and corresponding integral equations in Besov spaces ($B$-spaces). We obtain conditions of boundedness, differentiability and invertibility of considered singular operators, and solvability of some corresponding integral equations.

In the works of Calder\'{o}n and Zygmund \cite{CZ} and other authors \cite{H, M,S1,S2}, it were developed methods of real analysis, which allow to extend results on one dimensional singular integrals to multidimensional case by using methods of analytic functions of one variable. The main results of above cited papers on the theory of multidimensional singular integrals related to $L_p$-spaces. As far as we know, multidimensional singular integrals are considered for the first time in $B$-spaces of Besov in this paper, and it became possible after the research of previous authors. The presence in the scale of $B$-spaces of limit embedding theorems allows in some cases to achieve extremely limit results. For example, in the book \cite{B1}, the class of generalized analytic functions is extended by including continuous, in terms of $B$-spaces, functions, which in turn has a similar effect in the theories of singular integral equations with the Cauchy kernel \cite{B2}.
The main results of the paper are summarized in Theorems \ref{Th.1.1} and \ref{Th.1.2}, in which the conditions of boundedness, invertibility, and differentiability of the considered singular integral operators are obtained. In this paper, we used some results of Calder\'{o}n and Zygmund \cite{CZ}, L. H\"{o}rmander \cite{H} and materials from the theory of commutative normed rings \cite{GRSh} as well as information from the work \cite{M} of S.G. Mikhlin. The results of the paper will be used in the second part of the paper to study the (local and global) Noetherian conditions and more general singular integral operators, and the corresponding integral equations in $B$-spaces.

\section{Preliminaries}

 Let $E^n$ be a $n$-dimensional Euclidean space, $m$ is integer and $l>0$ is a number satisfying $m>l>0.$

By definition, a function $f$ belongs to the (isomorphic) Besov space $B^l_{p,\theta}(E^n), 1\leq p, \theta\leq\infty,$ if $f\in L_p(E^n)$ and the following seminorm is finite
$$||f||_{b^l_{p,\theta}(E^n)} = \Big{(} \int_{E^n} |h|^{-n-\theta l} ||\Delta^m_h f||^\theta_{L_p(E^n)} dh \Big{)}^{\frac{1}{\theta}},$$
where $\Delta^m_{h}$ is the finite difference of order $m$, $h\in E^n.$
In this space, the norm is defined by the following formula
$$||f||_{B^l_{p,\theta}(E^n)} = ||f||_{L_p(E^n)} + ||f||_{b^l_{p,\theta}(E^n)}.$$
For more details on this space we refer the reader to \cite{BIN,B}.
For commutative normed rings without a radical with symmetric involution (see \cite[pp. 57-59]{GRSh}), it is known from (see \cite[pp. 72-74)]{GRSh} that any non-invertible element is a generalized zero divisor.

\begin{definition}\label{D.1.1} An element $x$ in a commutative normed ring $R$ is called a generalized zero divisor, if there exists a sequence $\{y_n\}(\subset R)$ such that
\begin{enumerate}[{\rm (i)}]
\item $\inf_n ||y_n||>0,$

\item $\|x y_n\| \rightarrow 0\quad \text{as} \quad n \rightarrow \infty.$
\end{enumerate}
\end{definition}
If $h$ be a vector from $E^n,$ then we denote by $\tau_h$ the operator defined by
$$\tau_h f(x)=f(x+h).$$
\begin{definition}\label{D.2.1}
A linear bounded operator $A$ is called invariant with respect to the shift if
$$A(\tau_h f)=\tau_h (Af).$$
\end{definition}
Consider the operator $\alpha*f=f(\alpha x),$ where $\alpha\neq0$ is a real number.

\begin{definition}\label{D.3.1} An operator $ A $ will be called homogeneous, if for any $\alpha (>0)$ the following equality holds
\begin{equation}\nonumber
A (\alpha \ast f)=\alpha \ast (Af).
\end{equation}
\end{definition}

\section{Boundedness of singular integrals
with pole-independent characteristics}

$1^\circ.$ Consider the singular integral
$$Sf=\int_{E^n} \frac{\Omega(\theta)}{r^n} f(y) dy,\eqno(1.1)$$
where $r=|y-x|, \theta=\frac{y-x}{|y-x|},x,y \in E^n$
which is understood in the sense of the principal value of Cauchy
$$
Sf=\lim_{\varepsilon \rightarrow 0} \int_{r\geq \varepsilon} \frac{\Omega(\theta)}{r^n} f(y) dy.
$$
The function $ \Omega(\theta) $ is called the characteristic and $f (x)$ is called the density of the integral.
It is easy to verify directly that the singular integral operator $S$ given above is invariant (with respect to the shift) and homogeneous.
It follows from the result in \cite{CZ} of Calder\'{o}n and Zygmund that
\begin{enumerate}[{\rm (1)}]
\item $\Omega(\theta)$ is a homogeneous function of null degree, integrable over the unit sphere $S_1;$

\item $\int_{S_1} \Omega(\theta) d\theta=0;\quad \,\,\,\,\,\,\,\,\,\,\,\,\,\,\,\,\,\,\,\,\,\,\,\,\,\,\,\,\,\,\,\,\,\,\,\,\,\,\,\,\,\,\,\,\,\,\,\,\,\,\,\,\,\,\,\,\,\,\,\,\,\,\,\,\,\,\,\,\,\,\,\,\, \quad\quad\quad\quad\quad\quad\quad\quad\quad\quad\quad\quad(2.1)$

\item There is a number $\gamma(>1)$ such that $\int_{S_1} |\Omega(\theta)|^{\gamma} d\theta < \infty,$
\end{enumerate}
then the singular operator $S$ defined in (1.1) is bounded on $L_{p}(E^{n}), 1<p<\infty$, and the following inequality holds
$$||S||_{L_{p}(E^{n})} \leq C \Big{(} \int_{S_1} |\Omega(\theta)|^{\gamma} d\theta \Big{)}^{\frac{1}{\gamma}},\eqno(3.1)$$
where constant $C>0$ depends on $\gamma$. We everywhere further assume that conditions 1)-3) are fulfilled, i.e. the operator $S$ is bounded on $L_{p}(E^{n}), 1<p<\infty,$ and the inequality (3.1) holds. Throughout this paper, we denote by $\|\cdot\|_{B}$ and $\|\cdot\|_{p}$ the $B^l_{p,\theta} (E^n)$-norm and $L_p(E^n)$-norm, respectively.

\begin{thm}\label{Th.1.1} Operator $S$ defined by the singular integral (1.1) is bounded on $B^l_{p,\theta} (E^n)$, $1<p<\infty$, $1\leq\theta\leq\infty$, $l>0,$
besides its $B$-norm coincides with its $L_p$-norm
$$||Sf||_{B} \leq ||S||_p \cdot ||f||_{B}.\eqno(4.1)$$
\end{thm}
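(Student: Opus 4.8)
The plan is to use the two structural properties of $S$ established just above---translation invariance (Definition \ref{D.2.1}) and homogeneity (Definition \ref{D.3.1})---to transfer the $L_p$-bound (3.1) directly to the Besov scale. The key observation is that the finite difference operator is a fixed linear combination of shifts,
$$\Delta^m_h = (\tau_h - I)^m = \sum_{k=0}^{m} (-1)^{m-k}\binom{m}{k}\tau_{kh}.$$
Since $S$ is bounded on $L_p(E^n)$ and commutes with every $\tau_h$, it commutes with this combination, so that $S(\Delta^m_h f) = \Delta^m_h(Sf)$ for every $h \in E^n$ and every $f \in B^l_{p,\theta}(E^n) \subset L_p(E^n)$. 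These are genuine equalities in $L_p(E^n)$, because both sides are assembled from the bounded operator $S$ and translations; in particular one need not reopen the question of the principal value.

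First I would estimate the seminorm. Fixing $h$ and applying the $L_p$-bound for $S$ to the function $\Delta^m_h f \in L_p(E^n)$ gives
$$\|\Delta^m_h(Sf)\|_p = \|S(\Delta^m_h f)\|_p \le \|S\|_p\,\|\Delta^m_h f\|_p.$$
Raising to the power $\theta$, multiplying by $|h|^{-n - \theta l}$, integrating over $E^n$, and taking the $\theta$-th root, the constant $\|S\|_p$ factors out and yields $\|Sf\|_{b^l_{p,\theta}(E^n)} \le \|S\|_p\,\|f\|_{b^l_{p,\theta}(E^n)}$; for $\theta = \infty$ the integral is replaced by the corresponding supremum over $h$ and the same factorization applies. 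Combining this with the elementary bound $\|Sf\|_p \le \|S\|_p\,\|f\|_p$ and adding the two constituents of the $B$-norm gives inequality (4.1), hence boundedness of $S$ on $B^l_{p,\theta}(E^n)$ together with the estimate $\|S\|_{B\to B} \le \|S\|_p$.

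To obtain the asserted coincidence of the two operator norms I would invoke homogeneity. A change of variables $h \mapsto \alpha h$ in the seminorm produces the scalings $\|\alpha \ast f\|_p = \alpha^{-n/p}\|f\|_p$ and $\|\alpha \ast f\|_{b^l_{p,\theta}(E^n)} = \alpha^{\,l - n/p}\|f\|_{b^l_{p,\theta}(E^n)}$, so that in the $B$-norm the $L_p$-part dominates as $\alpha \to 0^{+}$. Since $S(\alpha \ast f) = \alpha \ast (Sf)$, the quotient $\|S(\alpha \ast f)\|_{B}/\|\alpha \ast f\|_{B}$ tends to $\|Sf\|_p/\|f\|_p$ as $\alpha \to 0^{+}$; taking the supremum over $0 \neq f \in B^l_{p,\theta}(E^n)$ and using that this space is dense in $L_p(E^n)$ recovers $\|S\|_p \le \|S\|_{B\to B}$, whence equality.

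The computations are essentially routine, and the engine of the whole argument is the single commutation relation $S\Delta^m_h = \Delta^m_h S$. The only points demanding care are the justification that $\Delta^m_h$ commutes with the principal-value operator $S$ (resolved by treating $S$ as a bounded operator on $L_p(E^n)$ and using linearity) and, for the coincidence of norms, the limiting scaling argument together with the density of $B^l_{p,\theta}(E^n)$ in $L_p(E^n)$. I expect this last step to be the most delicate, while the core inequality (4.1) is an immediate consequence of the commutation.
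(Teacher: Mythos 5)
Your proof of the core inequality (4.1) coincides with the paper's: both arguments rest on the commutation $\Delta^m_h(Sf)=S(\Delta^m_h f)$ together with the $L_p$-bound (3.1). The paper gets the commutation by verifying shift invariance of the integral and iterating the first difference $m$ times; you get it by expanding $\Delta^m_h=(\tau_h-I)^m$ as a linear combination of shifts. These are the same observation, and the ensuing seminorm estimate and summation of the two constituents of the $B$-norm are identical, giving $\|S\|_B\le\|S\|_p$ in both cases.

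Where you genuinely diverge is the reverse inequality $\|S\|_p\le\|S\|_B$. The paper dispatches it in one sentence: since the $B$-norm of $Sf$ contains $\|Sf\|_p$ as a summand, the inequality is asserted to follow. Taken literally this is a gap: from $\|Sf\|_p\le\|Sf\|_B\le\|S\|_B\|f\|_B$ one only obtains $\|Sf\|_p\le\|S\|_B\|f\|_B$, and since $\|f\|_B\ge\|f\|_p$ this does not bound $\|Sf\|_p/\|f\|_p$ by $\|S\|_B$. Your scaling argument supplies exactly the missing mechanism: using the homogeneity $S(\alpha\ast f)=\alpha\ast(Sf)$, the scalings $\|\alpha\ast f\|_p=\alpha^{-n/p}\|f\|_p$ and $\|\alpha\ast f\|_{b^l_{p,\theta}}=\alpha^{\,l-n/p}\|f\|_{b^l_{p,\theta}}$ (both of which you compute correctly), and $l>0$, the seminorm contribution carries an extra factor $\alpha^{l}$ and dies as $\alpha\to 0^{+}$, so the quotient of $B$-norms tends to $\|Sf\|_p/\|f\|_p$; density of $B^l_{p,\theta}(E^n)$ in $L_p(E^n)$ (for $p<\infty$, via smooth compactly supported functions) then upgrades the supremum over $f\in B^l_{p,\theta}$ to the full $L_p$ operator norm. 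So your proposal is correct, takes the paper's route for (4.1), and for the norm coincidence replaces the paper's one-line assertion with a rigorous dilation-plus-density argument; this is the one place where your write-up is actually more careful than the published proof.
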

\begin{proof} The integral (1.1) is invariant with respect to the shift.
Indeed, for any $h\in E^n$ we have

$$
\tau_h (Sf)=\int_{E^n} \frac{\Omega \Big{(} \frac{y-x-h}{|y-x-h|} \Big{)} }{|y-x-h|^n} f(y) dy =\int_{E^n} \frac{\Omega \Big{(} \frac{y-x}{|y-x|} \Big{)} }{|y-x|^n} f(y+h) dy=S(\tau_h f).
$$

Therefore, the first difference is equal to
$$
\Delta_h (Sf)(x)=(Sf)(x+h)-(Sf)(x)=\int_{E^n} \frac{\Omega(\theta)}{r^n} \Delta_h f(y)dy.
$$

Consequently, for any natural number $m,$ we obtain

$$\Delta_h^m (Sf)(x)= \int_{E^n} \frac{\Omega(\theta)}{r^n} \Delta^m_h f(y) dy. \eqno(5.1)$$

Hence, it follows from (5.1) and (3.1) that

$$||Sf||_{B} = ||Sf||_p +(\int_{E^n} \frac{||\Delta^m_h (Sf)||_p^{\theta}}{|h|^{n+\theta l}} dh)^{\frac{1}{\theta}} \leq\\[0.2in]$$
$$\leq ||Sf||_p + ||S||_p (\int_{E^n} \frac{||\Delta^m_h (f)||_p^{\theta}}{|h|^{n+\theta l}} dh)^{\frac{1}{\theta}} \leq\\[0.2in]$$
$$\leq||S||_p \Big{(} ||f||_p + ||f||_{b^l_{p,\theta}(E^n)} \Big{)}=||S||_p \cdot ||f||_{B} \eqno(6.1)$$

We obtained that the singular operator
$S$ maps the space $B^l_{p,\theta} (E^n)$ into itself and $||S||_B\leq ||S||_p$. But, the norm of $Sf$ in $B^l_{p,\theta} (E^n)$ contains
$L_p(E^n)$-norm of $Sf$ as a term, therefore, we have $||S||_p \leq||S||_B$.
Consequently, $||S||_B=||S||_p,$ thereby completing the proof.
\end{proof}

\vskip0.2cm
$2^{\circ}$.
Let us conduct some necessary information from the paper \cite{H} of L. H\"{o}rmander in a slightly different, more convenient form for us.
We denote by $N_p^p$ the set of bounded from $L_p(E^n)$ into itself, invariant with respect to the shift, operators. In the book \cite[pp. 101-102]{H}, it was proved that all operators $A$ from $N_p^p$ belong to $N_{p'}^{p'}$ $(\frac{1}{p}+\frac{1}{p'}=1)$, also, they belong to $N_q^q$, where $q$ is a number between $p$ and $p'.$
Consequently, $A\in N^2_2$.
If we consider $A$ as an operator from $N^2_2$ and apply with the Fourier transform on it, then the operator $A$ becomes a multiplication operator on a measurable essentially bounded function, which we denote by $\widehat{A}$. Following S.G.~Mikhlin (see \cite{M}), we will call $\widehat{A}$ the symbol of the operator $A.$ Therefore, for each $A$ from $N_p^p$ $(1<p<\infty)$
corresponds a symbol $\widehat{A}\in L_{\infty}(E^n),$ where $L_{\infty}$ is the space of all essential bounded functions on $E^n.$

The space consisting of functions $f$ represented as $f= \widehat{A}, $ where $ A\in N^p_p $, is denoted by $M^p_p. $
For the norm in this space we take
$$||f||_{M^p_p}=||A||_{L_p(E^n)\rightarrow L_p(E^n)}.$$

As already noted, $M^p_p=M^{p'}_{p'} \subset M^2_2$. Besides (see \cite[pp. 100-102]{H})
$$||f||_{M^2_2} \leq ||f||_{M^p_p} = ||f||_{M^{p'}_{p'}}.\eqno(1.2)$$

We denote by $H^{p}_p $ the subspace $M^p_p,$ which is the closure in the space $M^p_p$ of a class of homogeneous of zero degree everywhere infinitely differentiable, except zero, functions. It is clear that the class $H_2$ coincides with the set of homogenous, continuous on the shpere $S_1,$ functions.
Note also that the functions in $ H_p $ are continuous everywhere, except for the point zero.

The space of operators $A$ such that $\widehat{A}\in H_p $ is denoted by $Q_p.$ The norm in $ Q_p $ is the usual operator norm. Obviously, $ H_p $ and $ Q_p $ are isomorphic commutative rings. It follows from the Theorem 2.7 in \cite{H} that there is a one-to-one correspondence between the set of points of the sphere $ S_1 $ and the set of maximal ideals $\mathfrak {M}_p $ of the ring $ Q_p $ such that for the corresponding to each other $l(\in \mathfrak{M}_p)$ and $\xi (\in S_1)$ holds the equality
$$l(A)= \widehat{A}(\xi),$$ where $A$ is an operator from $Q_p$.

This means that the value of the operator $A(\in Q_p) $ on the maximal ideal $ l(\in \mathfrak{M}_p) $ coincides with the value of the function symbol $\widehat{A} $ at the corresponding point $ \xi(\in S_1). $
Therefore, it is possible to identify maximal ideals for these rings with the corresponding points (see Remark in \cite[p. 32]{GRSh}). Thus, in the ring $ Q_p $, the only element vanishing on all maximal ideals is the identical zero. So, $ Q_p $ is a ring without a radical.

We introduce an involution into the ring $ Q_p $ as follows: Each operator $A(\in Q_p) $ is associated with the adjoint operator $ A^{\ast}. $ It follows from the definition of the space $Q_p$ that $A^{\ast} \in Q_p$ and
$\widehat {A}^{\ast}=\overline{\widehat{A}},$ where the trait means a sign of complex conjugation (see \cite[Theorem 1.34, p. 160]{M}). Hence, the involution introduced is symmetric.

It is known that the infinitely differentiable symbol of the singular operator (1.1) corresponds to an infinitely differentiable characteristic (see \cite[Theorem 2.32 , p. 155]{M}). Thus, it follows from the Theorem \ref{Th.1.1} ($ \|S\|_B=\|S\|_p $) that the singular integral operator $S$ in (1.1) belongs to $Q_{p}.$ The operators similar to $S$ were investigated in symmetric spaces of measurable functions and sequences in \cite{STZ, T}.
The following result holds (see (1.2)).

\begin{cor}\label{C.1.2}
The singular integral operator $S$ defined in (1.1), which is bounded in the space $B^l_{p,\theta}(E^n)$, $ 1 <p <\infty $, $ 1 \leq\theta\leq \infty $, $ l> 0 ,$ is bounded in $ B^l_{p',\theta}(E^n)$ $(\frac{1}{p'}+\frac{1}{p}= 1),$ is also in $ B^l_{q,\theta}(E^n)$ for all $q$ between $ p $ and $ p ',$ and therefore it is also bounded in $B^l_{2,\theta}(E^n).$
\end{cor}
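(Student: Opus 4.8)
The plan is to reduce the statement to the $L_q$-boundedness of $S$ and then to reuse Theorem~\ref{Th.1.1}. The essential observation is that the proof of Theorem~\ref{Th.1.1} uses only two features of $S$: its invariance with respect to the shift, and its boundedness on $L_p(E^n)$. Indeed, the commutation identity (5.1), $\Delta_h^m(Sf)=S(\Delta_h^m f)$, follows purely from shift-invariance and linearity and holds independently of the exponent, while the subsequent chain (6.1) invokes only the operator norm $\|S\|_p$. Consequently the same argument establishes the following exponent-agnostic principle: \emph{whenever $S$ is bounded on $L_q(E^n)$, it is bounded on $B^l_{q,\theta}(E^n)$ with coinciding norms} $\|S\|_{B^l_{q,\theta}}=\|S\|_{L_q}$. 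Thus it suffices to secure $L_q$-boundedness of $S$ for the relevant exponents.

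Next I would pass to the $L_q$ level via Hörmander's transference, recorded just above the corollary. By Theorem~\ref{Th.1.1}, $S$ is a bounded, shift-invariant operator on $L_p(E^n)$, hence $S\in N_p^p$ (indeed $S\in Q_p$). By the result quoted from \cite[pp.~101--102]{H}, every operator in $N_p^p$ also belongs to $N_{p'}^{p'}$ and to $N_q^q$ for all $q$ between $p$ and $p'$; at the level of symbols this is precisely the self-dual inequality (1.2), $\|f\|_{M^2_2}\le\|f\|_{M^p_p}=\|f\|_{M^{p'}_{p'}}$. Therefore $S$ is bounded on $L_{p'}(E^n)$ and on $L_q(E^n)$ for each $q$ lying between $p$ and $p'$.

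Finally I would combine the two steps. Applying the principle of the first paragraph with $p$ replaced successively by $p'$ and by $q$, and using the $L_{p'}$- and $L_q$-boundedness just obtained, I conclude that $S$ is bounded on $B^l_{p',\theta}(E^n)$ and on $B^l_{q,\theta}(E^n)$ for every such $q$. Since $q=2$ always lies between $p$ and $p'$ when $1<p<\infty$, the choice $q=2$ yields boundedness on $B^l_{2,\theta}(E^n)$, which is the last assertion.

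The only point that genuinely requires care — and hence the main, if modest, obstacle — is verifying that the proof of Theorem~\ref{Th.1.1} is truly exponent-agnostic, that is, that nothing beyond shift-invariance and $L_q$-boundedness enters (5.1)--(6.1). Once this is confirmed, the corollary is a formal consequence of Hörmander's transference together with a second application of the theorem; no new analysis of the characteristic $\Omega$ is needed. Alternatively one could appeal directly to the Calderón--Zygmund bound (3.1), whose hypotheses (1)--(3) on $\Omega$ are themselves independent of $p$ and already guarantee $L_q$-boundedness for all $1<q<\infty$.
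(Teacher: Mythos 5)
Your proposal is correct and follows the paper's own route: the paper justifies the corollary precisely by the H\"ormander transference result quoted in $2^{\circ}$ (that operators in $N_p^p$ lie in $N_{p'}^{p'}$ and $N_q^q$ for $q$ between $p$ and $p'$, recorded at the symbol level as (1.2)), combined with a re-application of Theorem~\ref{Th.1.1}, whose proof indeed uses only shift-invariance and the $L_q$-operator norm. Your closing observation is also apt: since the Calder\'on--Zygmund hypotheses (1)--(3) are exponent-free, the bound (3.1) already gives $L_q$-boundedness for all $1<q<\infty$, so the corollary would follow even without transference --- the paper routes through H\"ormander's result mainly to set up the ring-theoretic framework used afterwards.
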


\begin{thm}\label{Th.1.2} If the operator $A\in Q_p $ has no inverse, then $A$ is a generalized divisor of zero. An operator $ A\in Q_p $ is invertible if and only if
$ \widehat{A}(\xi) \neq 0 $ at all points $\xi$ belonging to the unit sphere $ S_1 .$
\end{thm}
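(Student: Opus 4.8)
The plan is to establish the two assertions separately, relying on the ring-theoretic framework assembled in the preliminaries. The first assertion is an application of the structure theorem for commutative normed rings cited at the outset, while the invertibility criterion is the Gelfand characterization of invertibility transported through the established one-to-one correspondence between the maximal ideals $\mathfrak{M}_p$ of $Q_p$ and the points of the sphere $S_1$.

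For the first statement, I would begin by recording that $Q_p$ is a commutative normed ring that is without radical and carries the symmetric involution $A\mapsto A^{\ast}$ with $\widehat{A}^{\ast}=\overline{\widehat{A}}$, all of which was verified above. These are exactly the hypotheses under which the result of \cite[pp. 72-74]{GRSh} applies, and that result asserts directly that every non-invertible element of such a ring is a generalized divisor of zero in the sense of Definition \ref{D.1.1}. Thus, once the three structural properties of $Q_p$ are in hand, the first claim follows at once, with no separate construction of the approximating sequence $\{y_n\}$ required.

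For the invertibility criterion I would argue both implications. If $A\in Q_p$ is invertible with inverse $B$, then applying the symbol map—which is a ring isomorphism of $Q_p$ onto $H_p$ and hence multiplicative—to $AB=I$ gives $\widehat{A}(\xi)\,\widehat{B}(\xi)=1$ for every $\xi\in S_1$, since the identity operator has the constant symbol $1$; therefore $\widehat{A}(\xi)\neq 0$ everywhere on $S_1$. Conversely, suppose $\widehat{A}(\xi)\neq 0$ for all $\xi\in S_1$. Since the maximal ideals of $Q_p$ correspond bijectively to the points of $S_1$ through $l(A)=\widehat{A}(\xi)$, this hypothesis says precisely that $l(A)\neq 0$ for every maximal ideal $l\in\mathfrak{M}_p$, i.e.\ that $A$ lies in no maximal ideal of $Q_p$. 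By the standard Gelfand criterion—a non-invertible element of a commutative normed ring with unit generates a proper ideal, which is contained in some maximal ideal—the non-vanishing of $l(A)$ on every maximal ideal forces $A$ to be invertible.

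The step I expect to require the most care is the rigorous, as opposed to merely nominal, verification of the hypotheses of the theorem from \cite{GRSh} for the first assertion: one must confirm that the involution on $Q_p$ is genuinely symmetric and that $Q_p$ is genuinely without radical, the latter resting on the fact noted above that the only operator whose symbol vanishes at every point of $S_1$ is the zero operator. A secondary point, used in the converse half of the invertibility criterion, is the completeness of the maximal-ideal correspondence: the conclusion that ``$A$ lies in no maximal ideal'' has force only because \emph{every} maximal ideal of $Q_p$ arises from a point of $S_1$, so that non-vanishing of $\widehat{A}$ on $S_1$ leaves no maximal ideal containing $A$.
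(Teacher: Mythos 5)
Your proposal is correct and follows essentially the same route as the paper: the paper's proof consists precisely of noting that $Q_p$ is a commutative normed ring without radical and with symmetric involution, and then citing Theorem 1 of \cite[p.~73]{GRSh} and its Corollary 2 (the Gelfand invertibility criterion via the maximal-ideal correspondence with $S_1$). You have merely unpacked the citation—verifying the hypotheses and spelling out both directions of the invertibility criterion—which is a faithful elaboration rather than a different argument.
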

\begin{proof} As indicated above, $ Q_p $ is a normalized commutative ring without a radical and with symmetric involution. Further, the proof of the theorem is similar to the proof of Theorem 1 in \cite[p. 73]{GRSh} and its Corollary 2 for normed rings.
\end{proof}

\vskip1.0cm

$3^{\circ}.$ Consider singular equation
$$Af=af+\frac{1}{(2\pi)^{\frac{n}{2}}}\int_{E^n} \frac{\Omega(\theta)}{r^{n}} f(y) dy=g(x),\eqno(1.3)$$
where $g(x)\in B^l_{p,\theta} (E^n)$, $1<p<\infty$, $1\leq\theta\leq\infty$, $l>0$, $a$ is independent of $x$, which is written as (see \cite[p. 109]{M})

$$Af=F^{-1}\widehat{A}Ff=g(x),\eqno(1.3')$$
where, $\widehat{A}(\xi)$ is the symbol of $A$ (see (2.1))

$$\widehat{A}(\xi)=a+F(\frac{f(\theta)}{r^{n}}),$$
Here, $F$ is the Fourier transform in $E^n.$

Assuming that $ \widehat{A}(\xi) \neq 0 $ for all points $\xi\in S_{1}, $ it follows from the Theorem \ref{Th.1.2} that there is a bounded inverse operator $C=A^{-1}\in Q_ {p} $ and a bounded (continuous) inverse function $\widehat{A}^{-1}(\xi)$ which is the symbol of the operator $C.$ Consequently, according to the rule of multiplication of symbols, the equation (1.3 ') ((1.3)) has a unique in the space $ B^l_{p,\theta}(E^n),$ $ 1<p<\infty,$ $1\leq\theta\leq\infty,$ $l>0$ solution.
Moreover, this solution is expressed by the formula
$$f=F^{-1}A^{-1}(\xi)Fg.$$
A more general equation
$$F^{-1}\widehat{A}Ff+Tf=g(x),\eqno(2.3)$$
where $T$ is a completely continuous operator in $B^l_{p,\theta}(E^n), $ is equivalently reduced to an equivalent Fredholm type equation
$$f+CTf=Cf.$$

Now consider the following integral equation
$$f(z)-\mu(z)(\Pi f)(z)=g(z), \eqno (3.3)$$
where, $(\Pi f)(z)=-\frac{1}{\pi}\int\int_{E_{2}}\frac{f(\xi)}{(\xi-z)^{2}}d\zeta d\eta, z=x+iy, \xi=\zeta+i\eta$, $E^{2}$ is a two dimensional plane, $g(z)\in B^l_{p,\theta}(E^2),$ $1<p<\infty,$ $1\leq\theta\leq\infty$, $l>0,$ and $\mu(z)$ is a measurable function satisfying inequality

$$\mid\mu(z)\mid\leq q< 1. \eqno(4.3)$$

In order to avoid possible reservations, we assume that $\mu(z)$ is a finite infinitely differentiable function.

By Theorem \ref{Th.1.1} ($n=2$), the singular integral $(\Pi f)(z)$ is bounded in $ B^l_{p,\theta}(E^2),$ and it is used in the theory of generalized analytic functions \cite{V}, the problem of finding homeomorphisms of the (elliptic) system of Beltrami equations (\cite{B, V}), which is written as a complex equation

$$\frac{\partial W}{\partial\bar{z}}-\mu(z)\frac{\partial W}{\partial z}=0. \eqno(5.3)$$

If $(\Pi f)(z)$ is presented in the form
$$\Pi f=-\frac{1}{\pi}\int\int_{E^{2}}\frac{e^{-2i\theta}}{r^{2}}f(\xi)d\zeta d\eta,$$
where, $\theta=arg(\xi-z), r=\mid z-\xi\mid.$ Hence, it can be seen that $\Pi\in Q_{p}.$
According to Corollary \ref{C.1.2}, the operator $\Pi $ maps $ B^l_{p,\theta}(E^2)$ into itself and is bounded (see (1.2)). Moreover, its $ B $-norm coincides with its $L_{2}(E^{2})$-norm that is equal to one ([10, Chapter I.9]). There is a bounded inverse operator $ (J-\mu\Pi)^{-1}$, which acts on the scale of the spaces $ L_{p}(E^{2}),$ $ 1<p<\infty$, \cite{K,Vi}. Therefore, it follows from the Theorem \ref{Th.1.1} that the integral equation (3.3) is solvable in the space $ B^l_{p,\theta}(E^2) $ and this solution $ f (z)(\in B^l_{p,\theta}(E^2))$ is unique. Then, the expression
$$W(z)=z-\frac{1}{\pi}\int\int_{E^{2}}\frac{f(\xi)}{\xi-z}d\zeta d\eta $$
is the principal (complete) homeomorphism of the Beltrami equation (5.3) ([10, Section 3,5]).

\section{Acknowledgment}

This work was partially supported by the grant No. AP05133283
of the Science Committee of the Ministry of Education and Science of the Republic of Kazakhstan.


\begin{thebibliography}{99}

\bibitem{A} P.S. Alexandrov, {\it Combinatorial Topology}, OGIZ, Moscow, 1947. [In Russian]

\bibitem{At} F. V. Atkinson, {\it The normal solvability of linear equations in normed spaces}, Mat. Sb. (N.S.), {\bf 28(70)}:1 (1951), 3--14.


\bibitem{BIN} O.V. Besov, V.P. Il'in, and S.M. Nikol'ski\u{i}, {\it Integral representations of function and embedding theorems, 1, 2.}, John Willey, New York, 1978; 1979.

\bibitem{B1} Bliev N., {\it Generalized analytic functions in fractional spaces}, USA, Boston, Addison Wesley Longman Inc., 1997.

\bibitem{B2} Bliev N.K.,  {\it Singular integral operators with a Cauchy kernel in fractional spaces},  Siberian Mathematical Journal, {\bf 47}:1 (2006), 28--34.

\bibitem{B} B. V. Boyarsky, {\it Generalized solutions of a system of differential equations of first order and of elliptic type with discontinuous coefficients}, Mat. Sb. (N.S.), {\bf 43(85)}:4 (1957), 451--503.

\bibitem{CZ} A.P. Calder\'{o}n and A. Zygmund, {\it On singular integrals}, Amer. J. Math., {\bf 78}:2 (1956), 289--309.

\bibitem{Hal} P.R. Halmos, {\it Measure theory}, D. Van Nostrand, New York, 1950.

\bibitem{H} L. H\"{o}rmander, {\it Estimates for translation invariant operators in $L^p$ spaces}, Acta Math., {\bf 104} (1960), 93--140.

\bibitem{GRSh} I.M. Gelfand, D. A. Raikov, and G.E. Shilov, {\it Commutative Normed Rings}, "Fizmatgiz", Moscow, 1960. [In Russian]

\bibitem{K} I. I. Komyak, {\it On the solvability of a class of two-dimensional singular integral equations}, Dokl. Akad. Nauk SSSR, {\bf 250}:6 (1980), 1307--1310.

\bibitem{M} S.G. Mikhlin, {\it Multidimentional Singular Integrals and Integral Equations}, "Fizmatgiz", Moscow, 1962. [In Russian]

\bibitem{S1} I.B. Simonenko, {\it A new general method of investigating linear operator equations of singular integral equation type. I}, Izv. Akad. Nauk SSSR Ser. Mat., {\bf 29}:3 (1965), 567--586.

\bibitem{S2} I.B. Simonenko, {\it A new general method of investigating linear operator equations of singular integral equation type. II}, Izv. Akad. Nauk SSSR Ser. Mat., {\bf 29}:3 (1965), 757--782.

\bibitem{STZ} F. Sukochev, K. Tulenov, and D. Zanin, {\it The optimal range of the Calder\'{o}n operator and its applications.} J. Func. Anal. (2019), 1--47. doi.org/10.1016/j.jfa.2019.05.012 (In Press)

\bibitem{V} I.N. Vekua, {\it Generalized Analytic Function}, Oxford, New York, Pergamon Press, 1962.

\bibitem{Vi} V. S. Vinogradov, {\it On the solvability of a singular integral equation}, Dokl. Akad. Nauk SSSR, {\bf 241}:2 (1978), 272--274.

\bibitem{T}  K.S. Tulenov, {\it The optimal symmetric quasi-Banach range of the discrete Hilbert transform.} Arch. der Mathematik (2019), 1-12, DOI: 10.1007/s00013-019-01375-w. (In Press)

\end{thebibliography}
\end{document}